\newtheorem{thm}{Theorem}[section]
\newtheorem{prop}[thm]{Proposition}
\newtheorem{lem}[thm]{Lemma}
\theoremstyle{definition}
\newtheorem{definition}[thm]{Definition}
\DeclareMathOperator{\im}{Im}
\DeclareMathOperator{\sgn}{sgn}
\DeclareMathOperator{\ind}{Ind}
\newcommand{\Tlam}{\mathcal{T}_\lambda}
\newcommand{\tilMlam}{\tilde{M}^{\lambda}}
\newcommand\Tableau[1]{%
  \begin{tikzpicture}[scale=0.5,draw/.append style={thick,black},baseline=4mm]
    \tableauRow=0
    \foreach \Row in {#1} {
       \tableauCol=1
       \foreach\k in \Row {
          \draw(\the\tableauCol,\the\tableauRow)+(-.5,-.5)rectangle++(.5,.5);
          \draw(\the\tableauCol,\the\tableauRow)node{\k};
          \global\advance\tableauCol by 1
       }
       \global\advance\tableauRow by -1
    }
  \end{tikzpicture}
}
\newcommand\Tabloid[1]{%
  \begin{tikzpicture}[scale=0.5,draw/.append style={thick,black},baseline=4mm]
    \tableauRow=0
    \foreach \Row in {#1} {
       \tableauCol=1
       \foreach\k in \Row {
          \draw($(\the\tableauCol,\the\tableauRow)+(-.5,-.5)$)--++(1,0);
          \draw($(\the\tableauCol,\the\tableauRow)+(-.5,.5)$)--++(1,0);
          \draw(\the\tableauCol,\the\tableauRow)node{\k};
          \global\advance\tableauCol by 1
       }
       \global\advance\tableauRow by -1
    }
  \end{tikzpicture}
}
\newcommand\ColumnTabloid[1]{%
  \begin{tikzpicture}[scale=.5,draw/.append style={thick,black},baseline=4mm]
    \tableauRow=0
    \foreach \Row in {#1} {
       \tableauCol=1
       \foreach\k in \Row {
          \draw($(\the\tableauCol,\the\tableauRow)+(-.5,-.5)$)--++(0,1);
          \draw($(\the\tableauCol,\the\tableauRow)+(.5,-.5)$)--++(0,1);
          \draw(\the\tableauCol,\the\tableauRow)node{\k};
          \global\advance\tableauCol by 1
       }
       \global\advance\tableauRow by -1
    }
  \end{tikzpicture}
}
\newcommand{\Addresses}{{
  \bigskip
  \footnotesize

  S.~Brauner, \textsc{Department of Mathematics, University of Minnesota,
    Twin Cities}\par\nopagebreak
  \textit{E-mail address} : \texttt{braun622@umn.edu}

  \medskip

  T.~Friedmann, \textsc{Department of Mathematics and Statistics, Colby College}\par\nopagebreak
  \textit{E-mail address} : \texttt{tamar.friedmann@colby.edu}

}}
\subjclass[2010]{ 05E10, 20C30}
\keywords{Specht module, Garnir relations, Symmetric group, column tabloids}
\title{A Simplified Presentation of Specht Modules}
\author{Sarah Brauner and Tamar Friedmann}
  \date{}
\begin{document}
\maketitle
\begin{abstract}

Fulton and Kraskiewicz gave a presentation of Specht modules as a quotient of the space of column tabloids by dual Garnir relations. We simplify this presentation by showing that it
can be generated by a single relation for each pair of columns of a tableau with ordered columns, thereby significantly reducing the number of generators given in the original construction.
Our presentation applies to all Specht modules, and is of a similar nature to a recent result by Friedmann-Hanlon-Stanley-Wachs that applies to staircase partitions. We show that our presentation implies the Friedmann-Hanlon-Stanley-Wachs presentation.
\end{abstract}

\section{Introduction}
Representations of the symmetric group $S_m$ have a long and beautiful history in mathematics. Partitions of $m$ biject with the irreducible representations of $S_{m}$ given by Specht modules; these representations have a basis corresponding to standard Young tableaux. The relations that allow us to express any tableau as a linear combination of standard Young tableaux are called Garnir relations. 

For the remainder of this paper, we will work over $\mathbb{C}$. For a partition $\lambda =  (\lambda_{1}, \dots , \lambda_k)$ of $m$, let $\lambda^{'} = (\lambda^{'}_{1}, \dots , \lambda^{'}_{j})$ be the conjugate of $\lambda$
and let $S^{\lambda}$ be the Specht module corresponding to $\lambda.$  Also, let $\mathcal{T}_{\lambda}$ be the set of Young tableaux of shape $\lambda$ in which each element of $[m]$ appears exactly once.
For any $t \in \mathcal{T}_{\lambda}$, let $R_{t}$ be the row stabilizer of $t$, let $C_{t}$ be the column stabilizer of $t$, let $\{ t \} $ be the associated row tabloid, and let
 \[ \varepsilon_{t} = \sum_{\beta \in C_{t}} \sgn(\beta) \{\beta t \} \] be the associated row polytabloid of $t$. 
It is a classical result that the set of all $\varepsilon_{t}$ where $t$ is a standard Young tableau forms a basis of $S^{\lambda}$.

In \cite{Kras} and \cite{fulton}, both Kraskiewicz and Fulton introduce a dual construction of the Specht module, $\tilde{S^{\lambda}},$ using column tabloids rather than row tabloids. Column tabloids are quite similar to row tabloids: a row tabloid is an equivalence class of numberings of a Young diagram such that two row tabloids are equivalent if they have the same entries in each row. Dually, a \emph{column tabloid}, denoted $[t]$, is an equivalence class of numberings of a Young diagram such that two column tabloids are equivalent \emph{up to sign} if they have the same entries in each column. Herein lies a key difference between row and column tabloids: unlike row tabloids, column tabloids are antisymmetric within columns. That is, for a column tabloid $[t]$ and $\beta \in C_{t}$, we have $[t] = \sgn(\beta) \beta [t] = \sgn(\beta) [\beta t].$

Let $\tilde{M}^{\lambda}$ be the vector space generated by all $[t]$ where $t$ is a Young tableau of shape $\lambda$, modulo the antisymmetry relations which are generated by $[t] - \sgn(\beta)[\beta t]$ for each $t$ and $\beta \in C_{t}$. Thus a basis of $\tilde{M}^{\lambda}$ is given by all column-ordered tabloids of shape $\lambda$, where by ``ordered" we mean that the numbers in the tableaux increase going down the columns. 

The symmetric group acts on $[t] \in \tilde{M}^{\lambda}$ in the natural way: $\sigma [t] = [\sigma t].$ Fulton defines $\tilde{S^{\lambda}}$ to be the subspace of $\tilde{M}^{\lambda}$ spanned by elements of the form $\sum_{\alpha \in R_{t}} \alpha [t]$. He shows that this dual construction of a Specht module is isomorphic to its row tabloid counterpart, $S^{\lambda}$ \cite{fulton}.

In order to prove this result, Fulton defines a dual straightening algorithm which gives a presentation of Specht modules as a quotient space of $\tilde{M}^{\lambda}$ by dual Garnir relations. This presentation also appeared in \cite{Kras} two years earlier, and is discussed in Section \ref{spechtconstruction}. There is a dual Garnir relation for each $t\in \Tlam$, each choice of adjacent columns, and each $k$ up to the length of the next column. In Section \ref{newSp}, we simplify this presentation significantly by showing that over a field with characteristic 0, we need only a single relation called $\eta$ for each choice of adjacent columns of an ordered column tabloid $[t]\in \tilMlam$ (Theorem \ref{spechtgarnir}). Our result applies to all partitions. A simplification that applied to staircase partitions was previously achieved in \cite[Corollary 3.2]{FHSW} by Friedmann-Hanlon-Stanley-Wachs\footnote{In \cite{FHSW3}, the authors give a generalization of their result from staircase partitions to partitions of strictly decreasing column lengths.}; their presentation and methods were the motivation for our work. We show in Proposition \ref{kernelalpha} that our simplified presentation implies the Friedmann-Hanlon-Stanley-Wachs presentation. 


\section*{Acknowledgements}  The authors would like to thank Darij Grinberg, Marissa Miller, Leslie Nordstrom, Vic Reiner, and Michelle Wachs for useful conversations and an anonymous referee for helpful comments. 
Some of this work was carried out while the authors were at Smith College; the authors gratefully acknowledge the National Science Foundation (DMS-1143716) and Smith College for their support of the Center for Women in Mathematics and the Post-Baccalaureate Program. This work was partially funded by a Smith College Summer Research Fund, and SB was supported by the NSF Graduate Research Fellowship (Award Number DMS-0007404) during the drafting of the manuscript. 
\vskip 1cm

\section{A presentation of Specht modules using column tabloids}\label{spechtconstruction}

\subsection{Garnir relations for column tabloids}
In this section we recall known presentations of Specht modules in terms of dual Garnir relations. 

In \cite[Ch. 7.2]{fulton}, Fulton introduces a map  $$\alpha: \tilde{M}^{\lambda} \to S^{\lambda}$$  given by 
\[ \alpha: [t] \mapsto \varepsilon_{t}. \]
The map $\alpha$ is $S_{m}$-equivariant and surjective. Moreover, $\ker(\alpha)$ is generated by a set of relations which Fulton calls the dual Garnir relations.

The dual Garnir relations are constructed as follows. 
For a fixed column $c$ of a tableau $t$ of shape $\lambda$, and for $1\leq k \leq \lambda^{'}_{c+1}$, let $\pi_{c,k}(t)$ be the sum of column tabloids obtained from all possible ways of exchanging the top $k$ elements of the $(c+1)^{st}$ column of $t$ with any subset  of size $k$ of the elements of column $c$, and fixing all other elements of $t$. For example, for 
\[ t = \ytableausetup{centertableaux}
\begin{ytableau}
1 & 4  & 6\\
2 & 5 \\
3
\end{ytableau}  \]
we have 
\[ \pi_{1,1}(t) =  \hspace{.5em} \begin{array}{|c|c|c|}
  \color{red} 4 & \color{red} 1 & 6 \\
  2 & 5 \\
  3
\end{array} \hspace{.5em} +  \hspace{.5em} \begin{array}{|c|c|c|}
  1 & \color{red} 2 & 6\\
  \color{red} 4 & 5 \\
  3
\end{array} \hspace{.5em} +  \hspace{.5em} \begin{array}{|c|c|c|}
  1  & \color{red} 3 & 6\\
  2 & 5 \\
  \color{red} 4
\end{array}\]

Then the \emph{dual Garnir relation} $g_{c,k}(t)$ is
\begin{equation} \label{garnir} g_{c,k}(t) = [t] - \pi_{c,k}(t) .\end{equation}

Note that $t$ can be any tableau, not necessarily with increasing columns. The relation $g_{c,k}(t)$ is called a dual Garnir relation, and varying over $c$ and $k$ gives a straightening algorithm for column tabloids.

\subsection{The Specht module presentation}
The relations $g_{c,k}$ can be used to give several presentations of the Specht module $S^{\lambda}$, the first of which is the following:
\begin{thm}{\normalfont \cite[Ch. 7, Prop. 4]{fulton}} \label{fultonthm}
Let $\tilde{G}^{\lambda}$ be the subspace of $\tilde{M}^{\lambda}$ generated by $g_{c,k}(t)$ where $t$ varies across all $t \in \mathcal{T}_{\lambda}$, $1 \leq c \leq \lambda_1 -1$ and $1 \leq k \leq \lambda^{'}_{c+1}.$ Then
the kernel of $\alpha$ is $\tilde{G}^{\lambda}$. Thus, 
\[ S^{\lambda} \cong \tilde{M}^{\lambda} /\tilde{G}^{\lambda} .  \]
\end{thm}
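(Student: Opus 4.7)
The plan is to establish $S^\lambda \cong \tilde{M}^\lambda / \tilde{G}^\lambda$ by showing $\tilde{G}^\lambda \subseteq \ker(\alpha)$ and then forcing equality via a dimension count, using the dual straightening algorithm hinted at in the theorem name.

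First I would verify the easier inclusion: for each generator $g_{c,k}(t) = [t] - \pi_{c,k}(t)$, applying $\alpha$ produces the identity $\varepsilon_t = \sum \varepsilon_{t'}$, where $t'$ ranges over tableaux obtained by exchanging the top $k$ entries of column $c+1$ with a $k$-subset of column $c$. This is a repackaging of the classical Garnir relation on row polytabloids: taking $A$ to be the top $k$ entries of column $c+1$ and $B$ to be column $c$, one has $\sum_\pi \sgn(\pi)\,\varepsilon_{\pi t} = 0$ as $\pi$ ranges over a transversal of $(S_A \times S_B)\backslash S_{A \cup B}$. Grouping those $\pi$ according to which $k$-subset of $A \cup B$ now sits in column $c+1$, and cancelling the action of $S_A \times S_B$ using the antisymmetry of $\varepsilon_t$ within columns, yields the required equation.

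For the reverse inclusion I would run a dual straightening algorithm. Fix a total order on ordered column tabloids (for instance, lex order on the reading word taken top-to-bottom and left-to-right within each column, then column-by-column). Given a non-standard ordered column tabloid $[t]$, locate the leftmost pair of adjacent columns $c, c+1$ where the rows fail to increase, choose $k$ according to the first such row, and invoke $g_{c,k}(t)$ to rewrite $[t] \equiv \pi_{c,k}(t)$ modulo $\tilde{G}^\lambda$; after re-sorting columns via the antisymmetry relations defining $\tilde{M}^\lambda$, every term on the right is strictly smaller in the chosen order. Well-foundedness of the order ensures termination, so the images of standard Young tableaux span $\tilde{M}^\lambda / \tilde{G}^\lambda$.

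To finish, note that $\alpha$ carries ordered column tabloids of standard shape to the classical basis $\{\varepsilon_t : t \text{ standard}\}$ of $S^\lambda$. Thus the induced surjection $\tilde{M}^\lambda / \tilde{G}^\lambda \twoheadrightarrow S^\lambda$ sends a spanning set of cardinality at most $|\mathrm{SYT}(\lambda)|$ onto a basis of exactly that size, so both vector spaces have the same dimension and $\alpha$ descends to an $S_m$-equivariant isomorphism. The main obstacle is the straightening step: one must choose the columns $c, c+1$ and the cut $k$ so that after expanding $\pi_{c,k}(t)$ and restoring ordered column form every summand is strictly smaller. This is exactly where a careful choice of monomial order on column tabloids — and a precise match between that order and the Garnir relation used — is essential, and it is also what the paper's subsequent reduction to a single relation $\eta$ per pair of columns will streamline.
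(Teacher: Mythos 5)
The paper does not prove this theorem; it is quoted from \cite{fulton} (and \cite{Kras}), so your proposal must be judged against the standard argument in those sources. Your overall architecture --- kernel containment, dual straightening to show standard column tabloids span the quotient, then a dimension count against the basis $\{\varepsilon_t : t \text{ standard}\}$ --- is exactly the right one, and the final dimension-count step is correct as written.

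There is, however, a genuine gap in the kernel containment for $k \geq 2$. With $A$ the top $k$ entries of column $c+1$ and $B$ all of column $c$, the classical Garnir relation $\sum_\pi \sgn(\pi)\,\varepsilon_{\pi t}=0$ runs over $\binom{k+\lambda'_c}{k}$ cosets, one for each $k$-subset $S$ of $A\cup B$ placed into the positions of $A$. Only the cosets with $S\subseteq B$ give terms of $\pi_{c,k}(t)$; the cosets with $\emptyset \neq S\cap A \subsetneq A$ are \emph{partial} exchanges which appear in the Garnir sum but not in $g_{c,k}(t)$, and they do not cancel among themselves, so ``grouping and cancelling the action of $S_A\times S_B$'' does not produce $\alpha(g_{c,k}(t))=0$. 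Concretely, for $\lambda=(2,2)$ with $t$ having columns $\{1,2\}$ and $\{3,4\}$ and $k=2$, the Garnir relation reads $\varepsilon_t+\varepsilon_{(13)(24)t}=\varepsilon_{(13)t}+\varepsilon_{(23)t}+\varepsilon_{(14)t}+\varepsilon_{(24)t}$ --- note that even the sign of the full-exchange term is opposite to the one needed in $g_{1,2}$ --- and one only recovers $\varepsilon_t=\varepsilon_{(13)(24)t}$ after substituting the two $k=1$ relations into the right-hand side. So you need an induction on $k$ (absorbing the partial-exchange terms via the already-established $g_{c,j}$ for $j<k$), or Fulton's direct tabloid computation, rather than a single application of the classical Garnir relation. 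Separately, the straightening step is only asserted: you correctly flag that one must choose $(c,k)$ and a well-founded order so that every term of $\pi_{c,k}(t)$ is strictly smaller after re-sorting columns, but you do not verify that your lex order has this property, and that verification is the actual content of the spanning argument.
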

 A corollary to this theorem is another proof of the classical result that a basis of $S^{\lambda}$ is given by polytabloids of standard Young tableaux of shape $\lambda.$ Our main contribution to this theory will be to give a new presentation of $S^{\lambda}$ that  
reduces the number of generators for $\tilde{G}^{\lambda}$ even further.

\subsubsection{Further simplifications}\label{sec:otherpresentations}

There are two presentations for $S^{\lambda}$ that simplify Theorem \ref{fultonthm}. The first simplification is due to Fulton, who shows in an exercise in \cite[Ch. 7, Ex. 16]{fulton} that the presentation in Theorem \ref{fultonthm} can be simplified further using only the $g_{c,1}$ relations rather than $1 \leq k \leq \lambda^{'}_{c+1}$. Let $\tilde{I}^{\lambda}$ be the subspace generated by $g_{c,1}(t)$ for all $t \in \mathcal{T}_{\lambda}$ and $1 \leq c \leq \lambda_1 -1$. Then this is equivalent to the statement that 
\[  S^{\lambda} \cong \tilde{M}^{\lambda} /\tilde{I}^{\lambda}.  \]

The second simplification is due to Friedmann-Hanlon-Stanley-Wachs in \cite{FHSW} and applies to the case that $\lambda$
is a staircase partition, i.e. $\lambda$ has a conjugate of the form $(n, n-1, n-2, \cdots, n-r$ for some $r$). Their presentation is similar to the one we will give in Theorem \ref{spechtgarnir} in the sense that the number of relations needed to generate $S^{\lambda}$ is analogously reduced. In particular, let $\mathcal{T}^{*}_{\lambda}$ be the set of tableau with increasing columns where each element in $\{ 1, 2, \cdots, |\lambda| \}$ appears exactly once\footnote{Note this is equivalent to enumerating over $[t] \in \tilde{M}^{\lambda}.$}.
\begin{thm}{\normalfont\cite[Cor. 3.2]{FHSW}} \label{thm:FHSWpres}
Let $\lambda$ be a staircase partition and $\tilde{J}^{\lambda}$ be the subspace of $\tilde{M}^{\lambda}$ generated by $g_{c,\lambda^{'}_{c+1}}(t)$ for $t \in \mathcal{T}^{*}_{\lambda}$ and $1 \leq c \leq \lambda_{1}-1$. Then the kernel of $\alpha$ is $\tilde{J}^{\lambda}$. Thus
\[ S^{\lambda} \cong \tilde{M}^{\lambda} /\tilde{J}^{\lambda} .  \]

\end{thm}
The motivation for this presentation comes from the free LAnKe, a generalization of the free Lie algebra. In \cite{FHSW}, the authors initiate the study of the free LAnKe; for more on the connection between the LAnKe and presentations of Specht modules, see \cite[\S 3]{FHSW}. Theorem \ref{thm:FHSWpres} and its connection to the free LAnKe were the initial motivation for this work, and we will employ similar methods to the ones developed in \cite{FHSW}. We will see in Proposition \ref{kernelalpha} that our presentation in Theorem \ref{spechtgarnir} implies the presentation in Theorem \ref{thm:FHSWpres}, and therefore also the CataLAnKe Theorem \cite[Thm. 1.3]{FHSW}.

\section{A simplified presentation} \label{newSp}
In this section we derive a presentation of $S^\lambda$ for $\lambda$ of any shape that requires fewer relations than those needed in Theorem \ref{fultonthm}.

We begin by narrowing our study to partitions $\mu$ of $n+m$ with shape $2^{m}1^{n-m}$, so $\mu$ has a column of size $n$ and a column of size $m$ for $1 \leq m \leq n$, and $\mu'=(n,m)$. We shall generalize these results to partitions of any shape at the end of this section.

Note that by the antisymmetry of column tabloids, $\tilde{M}^{\mu}$, viewed as an $S_{n+m}$-module, can be induced from the Young subgroup $S_{n} \times S_{m} \leq S_{n+m}$ as follows:

\[ \tilde{M}^{\mu}  \cong \ind_{S_{n} \times S_{m}}^{S_{n+m}} \left ( \sgn_{n} \otimes \sgn_{m} \right ) \cong \bigoplus_{i=0}^{m} S^{2^{i}1^{n+m - 2i}} .\]

We will introduce a relation on tableaux that can be thought of as a sum of $\pi_{1,1}$ relations, and which has the advantage of symmetrizing over all positions of elements in the second column. Doing so allows us to restrict our study to tableaux with ordered columns, and to therefore define a new linear transformation from $\tilde{M}^{\mu}$ to $\tilde{M}^{\mu}$.
\begin{definition} \label{defeta}
For $\mu = 2^m1^{n-m}$, let $\eta: \tilde{M}^{\mu} \to \tilde{M}^{\mu}$ be the map
\[\eta[t] = m[t] - \sum [s] \]
where the sum ranges over all possible tableaux $s$ obtained from $t$ by swapping one entry in the second column of $t$ with one entry in the first column. 
\end{definition}
For example, let 
\[ [t] =  \hspace{.25em} \begin{array}{|c|c|}
  1 & 4\\
  2 & 5 \\
  3 
\end{array}. \]
Then
\[ \eta([t]) =  2 \hspace{.5em}  \begin{array}{|c|c|}
  1 & 4\\
  2 & 5 \\
  3 \end{array} \hspace{.25em} - \hspace{.25em}
  \left( \hspace{.25em} \begin{array}{|c|c|}
  \color{red} 4 & \color{red} 1\\
  2 & 5 \\
  3
\end{array}\hspace{.25em} + \hspace{.25em} \begin{array}{|c|c|}
  1 & \color{red} 2\\
  \color{red} 4 & 5 \\
  3
\end{array}\hspace{.25em} + \hspace{.25em} \begin{array}{|c|c|}
  1  & \color{red} 3\\
  2 & 5 \\
  \color{red} 4
\end{array} 
\hspace{.25em}+\hspace{.25em}  \begin{array}{|c|c|}
  \color{red} 5 & 4\\
  2 & \color{red} 1 \\
  3
\end{array}\hspace{.25em} +\hspace{.25em} \begin{array}{|c|c|}
  1 & 4\\
  \color{red} 5 & \color{red} 2 \\
  3
\end{array} \hspace{.25em}+\hspace{.25em} \begin{array}{|c|c|}
  1  & 4\\
  2 & \color{red} 3 \\
  \color{red} 5
\end{array} \hspace{.25em}   \right) . \]

It is clear that $\eta$ is $S_{n+m}$-equivariant. Furthermore, it follows from Theorem \ref{fultonthm} that $\im(\eta) \subseteq \ker(\alpha)$, as $\eta([t])$ is a sum of dual Garnir relations. Using a technique employed in \cite[Sec. 2]{FHSW}, we will now show that the relations generated by $\eta$ are all that is needed to generate $\tilde{G}^{\mu}$.  

\begin{thm} \label{imetakeralpha}
For $\mu = 2^m1^{n-m}$, $\ker(\eta) \cong S^\mu$, and thus $\im(\eta) = \ker(\alpha)$ for $\alpha : \tilde{M}^\mu \rightarrow S^\mu$. 
\end{thm}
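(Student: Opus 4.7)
The plan is to exploit the multiplicity-free decomposition $\tilde{M}^{\mu} \cong \bigoplus_{i=0}^{m} S^{2^{i}1^{n+m-2i}}$ already recorded in the excerpt by invoking Schur's lemma. Since $\eta$ is $S_{n+m}$-equivariant, it must act as a scalar $\lambda_i$ on each summand $S^{2^{i}1^{n+m-2i}}$. The theorem then follows once one shows that $\lambda_{m}=0$ while $\lambda_{i}\neq 0$ for every $i<m$: this forces $\ker(\eta)$ to be exactly the $S^{\mu}$ summand and $\im(\eta)$ to be its complement, and since $\im(\eta)\subseteq \ker(\alpha)$ (each $g_{1,1}^{\ell}([t])$ is a dual Garnir relation by the proposition and Theorem \ref{fultonthm}) while $\alpha$ restricts to an isomorphism on the $S^{\mu}$ summand, we conclude $\im(\eta)=\ker(\alpha)$.

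The key technical step is to rewrite $\eta$ as a scalar translate of a central element, which will make the Schur eigenvalues computable. Unpacking the definitions, for $[t]\in \tilde{M}^{\mu}$ with column-$1$ entries $A$ and column-$2$ entries $B$ one has $\pi_{1,1}^{j}([t])=\sum_{a\in A}(a,b_{j})\,[t]$, so that
\[ \eta([t])=m\,[t]-\sum_{a\in A,\,b\in B}(a,b)\,[t]. \]
Now the sum of all transpositions $Z=\sum_{i<j}(i,j)\in\mathbb{C}[S_{n+m}]$ decomposes as
\[ Z \;=\; \sum_{\{a,a'\}\subset A}(a,a')\;+\;\sum_{\{b,b'\}\subset B}(b,b')\;+\;\sum_{a\in A,\,b\in B}(a,b), \]
and by the defining column-antisymmetry the first two sums act on $[t]$ as $-\tbinom{n}{2}[t]$ and $-\tbinom{m}{2}[t]$ respectively. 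Rearranging,
\[ \eta([t]) \;=\; \Big(m-\tbinom{n}{2}-\tbinom{m}{2}\Big)[t]\;-\;Z\cdot[t], \]
valid uniformly on $\tilde{M}^{\mu}$. Because $Z$ is central, it acts on $S^{2^{i}1^{n+m-2i}}$ by the content sum $c_{i}=\sum_{(r,c)}(c-r)$, so $\lambda_{i} = m-\tbinom{n}{2}-\tbinom{m}{2}-c_{i}$.

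A direct content calculation for $\nu=2^{i}1^{n+m-2i}$ yields $c_{i} = i(2-i)-\tfrac{1}{2}(n+m-2i)(n+m-1)$; substituting and simplifying produces the clean formula $\lambda_{i}=(m-i)(n+1-i)$. Since $1\le m\le n$ and $0\le i\le m$, the factor $n+1-i$ is strictly positive, so $\lambda_{i}=0$ if and only if $i=m$, which completes the proof. The main obstacle is spotting the non-obvious rewriting of the tableau-dependent operator $\eta$ in terms of the global central element $Z$; once this reformulation is in hand, the content computation and the Schur-theoretic conclusion are routine.
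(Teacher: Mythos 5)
Your proof is correct, and while it shares the paper's overall strategy --- Schur's lemma applied to the multiplicity-free decomposition $\tilde{M}^{\mu}\cong\bigoplus_{i=0}^{m} S^{2^{i}1^{n+m-2i}}$, followed by identifying which eigenvalue vanishes --- your method of computing the eigenvalues is genuinely different. The paper (Theorem \ref{eta}) evaluates $\langle\eta(r_{t}f_{t}v_{T}),v_{T}\rangle$ by a hands-on analysis of Young symmetrizers with careful sign-tracking of tabloid reorderings. You instead observe that the within-column transpositions act by $-1$ on column tabloids, so that $\eta$ is an affine translate of the central element $Z=\sum_{i<j}(i,j)$, and the eigenvalues drop out of content sums. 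Besides being shorter and less error-prone, your computation exposes a slip in the paper: the scalar on $S^{2^{i}1^{n+m-2i}}$ is $(m-i)(n+1-i)$, not the paper's $2(m-i)$. A trace check confirms your formula: each diagonal entry of $\eta$ in the $v_{T}$ basis equals $m$ by Lemma \ref{lemma}, so $\operatorname{tr}\eta=m\binom{n+m}{n}$; for $\mu=(2,1)$ this is $3$, which matches $1\cdot(1-0)(2+1-0)+2\cdot 0=3$ but not $1\cdot 2(1-0)+2\cdot 0=2$, and for $\mu=(2,2)$ the trace is $12=1\cdot 6+3\cdot 2+2\cdot 0$, whereas $2(m-i)$ would give $10$. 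Both formulas vanish exactly at $i=m$, so Theorem \ref{imetakeralpha} itself is unaffected, but your eigenvalues are the correct ones. Your concluding step --- $\ker(\eta)$ is the $i=m$ summand, $\im(\eta)$ is its complement, and equivariance plus surjectivity of $\alpha$ forces $\ker(\alpha)$ to be that same complement --- matches the paper's.
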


Note that because 
\[ \tilde{M}^{\mu} \cong \bigoplus_{i=0}^{m} S^{2^{i}1^{n+m - 2i}}\]
is multiplicity-free, by Schur's Lemma $\eta$ acts as a scalar on each irreducible submodule of $\tilde{M}^{\mu}$. Thus, finding the kernel of $\eta$ is equivalent to finding the irreducible submodules of $\tilde{M}^{\mu}$ on which $\eta$ acts like the 0 scalar.

We proceed by computing the action of $\eta$ on each irreducible submodule of $\tilde{M}^{\mu}$. For each $T \in \binom{[n+m]}{n}$, let $v_{T}\in \tilde{M}^{\mu}$ be the column tabloid with first column $T$ (both columns assumed to be in increasing order). For any $v\in \tilde{M}^{\mu}$, let $\langle v, v_T \rangle$ be the coefficient of $v_T$ in the expansion of $v$ in the basis of all $v_T$. 
\begin{lem}\label{lemma}
For every $S, T \in \binom{[n+m]}{n}$,  
\[ \langle \eta(v_{S}), v_{T} \rangle = \begin{cases}
m & \textrm{if } S = T \\
0 & \textrm{if } |S \cap T| < n-1 \\
(-1)^{x+y} & \textrm{if } |S \cap T| = n -1 \textrm{ with}\\
&  x \in S \backslash T, y \in T \backslash S

\end{cases} \]
\end{lem}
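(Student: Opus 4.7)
The plan is to expand $\eta(v_S) = m[v_S] - \sum_{j=1}^{m} \pi_{1,1}^{j}([v_S])$ and collect the coefficient of $v_T$, splitting on the size of $S \cap T$. A key preliminary observation is that every summand of $\pi_{1,1}^{j}([v_S])$ is obtained by swapping a single element of column~$1$ with a single element of column~$2$ of $v_S$; hence its first column differs from $S$ in exactly one element and its second column differs from $[n+m]\setminus S$ in exactly one element.

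The two easy cases follow directly. If $S = T$, then $m[v_S] = m\,v_T$, while the observation above shows that no term of any $\pi_{1,1}^{j}([v_S])$ has first column equal to $S$; the coefficient is $m$. If $|S \cap T| < n-1$, then the first columns of $v_S$ and $\pm v_T$ differ in at least two elements, so no single swap can produce $\pm v_T$, and $m[v_S]$ contributes nothing either; the coefficient is $0$.

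The main case is $|S \cap T| = n-1$. Write $S \setminus T = \{x\}$ and $T \setminus S = \{y\}$, let $i$ be the position of $x$ in the ordered column~$S$, and let $j$ be the position of $y$ in the ordered column $[n+m]\setminus S$. Then exactly one term among all the $\pi_{1,1}^{j}([v_S])$ has column content $T$ and $[n+m]\setminus T$: namely the exchange of the $j$-th entry of column~$2$ with the $i$-th entry of column~$1$. That term is $\pm v_T$, the sign coming from sorting each of the two columns; using the antisymmetry of $\tilde{M}^{\mu}$, I would compute the sign as $(-1)^{r_1+r_2}$ by counting inversions, where $r_1$ is the number of elements of $S$ strictly between $x$ and $y$ and $r_2$ is the number of elements of $[n+m]\setminus S$ strictly between $x$ and $y$. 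Since $r_1+r_2$ counts all integers strictly between $x$ and $y$, namely $|x-y|-1$, combining with the overall minus sign in the definition of $\eta$ gives $-(-1)^{|x-y|-1} = (-1)^{|x-y|} = (-1)^{x+y}$, as desired.

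The main obstacle is the sign computation in the third case: the bookkeeping must handle $y<x$ and $y>x$ uniformly, and one needs the clean identification $r_1 + r_2 = |x-y|-1$ (which is really a partition of the $|x-y|-1$ integers in the open interval between $x$ and $y$ into those belonging to $S$ and those belonging to $[n+m]\setminus S$) to arrive at the stated form $(-1)^{x+y}$. The remaining pieces of the argument are immediate from the definition of $\eta$ and the ``one element per column'' observation.
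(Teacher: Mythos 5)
Your proposal is correct and follows essentially the same route as the paper: identify the unique single-exchange term whose first column has content $T$, then compute the sign incurred by re-sorting both columns and combine it with the minus sign in $\eta$. Your inversion count $r_1+r_2=|x-y|-1$ is just a cleaner reorganization of the paper's row-index arithmetic (where the two column contributions $y-r_y-r_x$ and $r_y-x+r_x-1$ sum to $y-x-1$), with the small advantage of handling $x<y$ and $y<x$ uniformly where the paper treats only one case explicitly.
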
 
\begin{proof}
The first two cases follow easily from the definition of $\eta$. For the last case, suppose $x$ is in the $r_{x}^{th}$ row in the first column of $v_{S}$ and $y$ is in the $r_{y}^{th}$ row of the second column of $v_{S}$. Then there are precisely $x-1$ numbers smaller than $x$ altogether, with $r_{x}-1$ of them in the first column. It follows that there are $x - r_{x}$ numbers smaller than $x$ in the second column. Similarly, there are $r_{y} - 1$ numbers smaller than $y$ in the second column and $y - r_{y}$ numbers smaller than $y$ in the first column. 

There are two cases: $x< y$ or $y < x$. 
Suppose $x<y$ and swap the positions of $x$ and $y$. Then in order to obtain an element in the basis of $\tilde{M}^{\lambda}$, we must move $y$ to the $(y-r_{y})^{th}$ row of the first column and $x$ to the $(x - r_{x} +1)^{st}$ row of the second column. This means moving $y$ from the $r_{x}^{th}$ row down to the $(y-r_{y})^{th}$ row, which requires $y - r_{y} - r_{x}$ transpositions. Similarly moving $x$ up from the $r_{y}^{th}$ row to the $(x - r_{x} + 1)^{st}$ row requires $r_{y} - x + r_{x} - 1$ transpositions. Altogether, this amounts to a sign change of 
\[ (-1)^{r_{y} - x + r_{x} - 1 + y - r_{y} - r_{x}} = (-1)^{y-x-1} .\] 
Finally, taking into account that $\eta$ itself contributes a sign change of $(-1)$, we obtain the coefficient $(-1)^{x+y}$ for $\langle \eta(v_{S}), v_{T} \rangle$. The case $y<x$ is similar.
\end{proof}

We next calculate the scalar action of $\eta$ on the irreducible submodules of $\tilde{M}^{\mu}$. 

\begin{thm} \label{eta}
On the irreducible submodule of $\tilde{M}^{\mu}$ isomorphic to $S^{2^{i}1^{(n+m) - 2i}}$, the operator $\eta$ acts like a scalar $\omega_{i}$, where
\[ \omega_{i} := (n+1)(m-i) .\]
\end{thm}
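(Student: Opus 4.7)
The strategy is to invoke Schur's lemma. Since the decomposition $\tilde M^\mu \cong \bigoplus_{i=0}^m S^{2^i 1^{n+m-2i}}$ is multiplicity-free and $\eta$ is $S_{n+m}$-equivariant, $\eta$ must act as a scalar $\omega_i$ on each irreducible summand. To identify $\omega_i$, it therefore suffices to exhibit, for each $i$, a single nonzero test vector $w_i \in \tilde M^\mu$ that provably lies in the submodule isomorphic to $S^{2^i 1^{n+m-2i}}$, and then to read off $\omega_i$ from $\eta(w_i) = \omega_i w_i$ using Lemma~\ref{lemma} coefficient by coefficient.

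Two of the scalars come essentially for free. The case $i = m$ follows from the already-noted inclusion $\im(\eta) \subseteq \ker(\alpha)$: since $\eta$ acts as $\omega_j$ on the $j$-th summand, $\im(\eta) = \bigoplus_{\omega_j \neq 0} S^{2^j 1^{n+m-2j}}$, and since $\ker(\alpha) = \bigoplus_{j \neq m} S^{2^j 1^{n+m-2j}}$ (the unique codimension-$\dim S^\mu$ submodule of $\tilde M^\mu$), we must have $\omega_m = 0$, matching $2(m-m)$. For $i = 0$ the summand is the one-dimensional sign representation, spanned by the shuffle-sign vector $w_0 = \sum_T \varepsilon_T\, v_T$, where $\varepsilon_T$ is the sign of the shuffle that places the entries of $T$ before those of $[n+m]\setminus T$. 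By Lemma~\ref{lemma} the coefficient of any fixed $v_{T_0}$ in $\eta(w_0)$ collapses to $m\,\varepsilon_{T_0}$ plus a signed sum over the $nm$ adjacencies $(S, T_0)$ in the Johnson graph $J(n+m, n)$, each weighted by $(-1)^{x+y}\varepsilon_S$; this sum can be evaluated in closed form to extract $\omega_0$.

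For intermediate $i$, the test vector $w_i$ will be built as a partial antisymmetrization of a canonical column tabloid $v_{T_0}$ over the column group of a fixed standard Young tableau of shape $2^i 1^{n+m-2i}$, with the antisymmetrization signs arranged so that $w_i$ provably lies in the correct irreducible summand (a Young-symmetrizer-style construction, adapted to the column tabloid setting). The evaluation $\eta(w_i) = \omega_i w_i$ then proceeds by expanding via Lemma~\ref{lemma} and performing a sign-tracking computation on the signed Johnson graph. A useful consistency check throughout is the trace identity $\sum_i \omega_i \dim S^{2^i 1^{n+m-2i}} = m \binom{n+m}{n}$, which is immediate from the diagonal entries in Lemma~\ref{lemma}.

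The main obstacle is organizing the construction of $w_i$ so that the antisymmetrization signs and the weights $(-1)^{x+y}$ from Lemma~\ref{lemma} align cleanly to a scalar multiple of $w_i$, rather than a sum that must be painfully reassembled. If this direct route proves unwieldy, a fallback is to compute $\mathrm{tr}(\eta\cdot\sigma)$ against enough conjugacy classes $\sigma \in S_{n+m}$ and extract all the $\omega_i$ as the unique solution of the resulting linear system, using the known characters of the irreducibles appearing in $\tilde M^\mu$.
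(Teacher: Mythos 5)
Your skeleton---Schur's lemma on the multiplicity-free decomposition, then one test vector per irreducible summand evaluated coefficient-by-coefficient via Lemma~\ref{lemma}---is exactly the paper's, and your derivation of $\omega_m=0$ from $\im(\eta)\subseteq\ker(\alpha)$ is a legitimate shortcut. But there is a genuine gap: for $i=0$ and for all intermediate $i$ you never actually construct the test vector $w_i$, never verify it lies in (or has nonzero projection onto) the right summand, and never carry out the sign computation; you describe the construction as something that ``will be built'' and explicitly flag it as the main obstacle, with a fallback. That construction and the ensuing bookkeeping are the entire content of the theorem. The paper does it concretely: take $T=[n]$ and a tableau $t$ of shape $2^i1^{n+m-2i}$, factor the Young symmetrizer as $e_t=r_tf_td_t$, note that $d_tv_T$ is a nonzero scalar multiple of $v_T$ by column antisymmetry so that $r_tf_tv_T$ is a nonzero multiple of $e_tv_T$ with $\langle r_tf_tv_T,v_T\rangle=1$, and then read off the scalar as $\langle\eta(r_tf_tv_T),v_T\rangle$ by classifying the subsets $S$ with $|S\cap T|=n-1$ occurring in $r_tf_tv_T$ (those reached by a row transposition and those reached by an order-preserving column move), tracking the straightening signs in each case. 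Without an analogous explicit vector and computation, your argument proves only $\omega_m=0$.

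A second, more serious point is one your own proposal surfaces: actually run your trace consistency check $\sum_i\omega_i\dim S^{2^i1^{n+m-2i}}=m\binom{n+m}{n}$ on $n=2$, $m=1$, so $\mu=(2,1)$ and $\tilde M^\mu\cong S^{(2,1)}\oplus S^{(1,1,1)}$. The stated formula gives $2\cdot 1+0\cdot 2=2$, while the trace is $1\cdot\binom{3}{2}=3$. A direct $3\times 3$ computation of $\eta$ in the basis $v_{\{1,2\}},v_{\{1,3\}},v_{\{2,3\}}$ gives a rank-one matrix with eigenvalues $0,0,3$, so $\eta$ acts by $3$, not $2$, on the sign representation. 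Thus the formula $\omega_i=2(m-i)$ cannot be correct in general; rewriting $\sum_j\pi_{1,1}^j$ in terms of the sum of all transpositions and using contents yields $\omega_i=(m-i)(n-i+1)$, which passes the trace check and agrees with $2(m-i)$ only when $i=m$ or $i=n-1$. What survives---and all that Theorem~\ref{imetakeralpha} actually uses---is that $\omega_i=0$ if and only if $i=m$. So your trace identity is not a side remark: it is the one fully concrete step in your plan, and executing it would have revealed that the target scalar needs correcting before any test-vector computation could confirm it.
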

\begin{proof}
For simplicity, take $T = [n]$. 
Then for a given $i$, we take $t$ to be the Young tableau given by
\begin{center}
    \includegraphics[scale = .45]{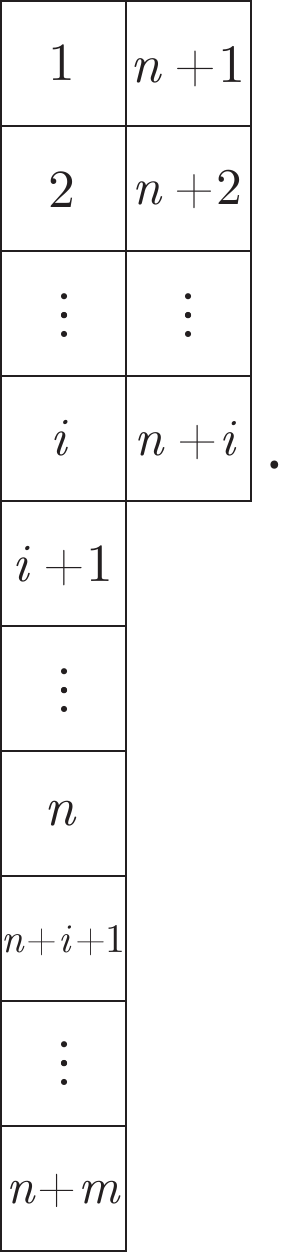}
\end{center}
Recall that $C_{t}$ is the column stabilizer of $t$ and $R_{t}$ is the row stabilizer of $t$. Then we denote by $e_{t}$ the Young symmetrizer of $t$:
\[ e_{t} = \sum_{\alpha \in R_{t}} \sum_{\beta \in C_{t}} \sgn(\beta)\alpha \beta. \]
As in \cite[Thm 2.4]{FHSW}, we adopt a slight abuse of notation by referring to the restriction to $S^{2^{i}1^{n+m-2i}}$ of the space spanned by $\tau e_{t}v_{T}$ for $\tau \in S_{n+m}$ to be $S^{2^{i}1^{n+m-2i}}$ itself.

We define $d_{t}$, $f_{t}$ and $r_{t}$ as in \cite[Thm 2.4]{FHSW}; that is $r_{t} = \sum_{\alpha \in R_{t}} \alpha$, while $d_{t}$ is the signed sum of column permutations stabilizing $\{ 1, 2, \dots , n \}, \{ n+1, \dots, n+i \}$, and $\{ n+i+1, \dots n+ m \}$, and $f_{t}$ is the signed sum of permutations in $C_{t}$ that maintain the vertical order of these sets. Then $e_{t}v_{T} = r_{t}f_{t}d_{t}v_{T}$. 

The antisymmetry of column tabloids ensures that $d_{t}v_{T}$ is a scalar multiple of $v_{T},$ because it simply permutes within columns. Therefore we can conclude that $r_{t}f_{t}v_{T}$ is a scalar multiple of $e_{t}v_{T}$, and in particular that $e_{t}v_{T}$ is nonzero, as the coefficient of $v_{T}$ in $r_{t}f_{t}v_{T}$ is 1.
Consider $\eta(r_{t}f_{t}v_{T}).$ In the subspace restricted to $S^{2^{i}1^{n+m-2i}}$, the fact that $\eta$ acts on $e_{t}v_{T}$ as a scalar implies the same is true of $r_{t}f_{t}v_{T}.$ In fact, because the coefficient of $v_{T}$ in $r_{t}f_{t}v_{T}$ is 1, we can determine precisely what this scalar is by computing $\langle \eta(r_{t}f_{t}v_{T}), v_{T} \rangle$. In particular, we wish to show that

\[ \langle \eta(r_{t}f_{t}v_{T}), v_{T} \rangle = \omega_{i} = (n+1)(m-i) .\]

We have 
\[ r_{t}f_{t}v_{T} = \sum_{S \in \binom{[n+m]}{n}} \langle r_{t}f_{t}v_{T}, v_{S} \rangle v_{S} .\]
Applying the linear operator $\eta$ thus gives 
\[ \eta(r_{t}f_{t}v_{T}) = \sum_{S \in \binom{[n+m]}{n}} \langle r_{t}f_{t}v_{T}, v_{S} \rangle \eta(v_{S}) .\]

Note that when $T = S$, by Lemma \ref{lemma} we have $\langle \eta(v_{T}), v_{T} \rangle = m$. With this, we can compute the coefficient of $v_{T}$ in general by

\begin{align} \label{equation}
\hskip -3em \langle \eta (r_{t} f_{t} v_{T}), v_{T} \rangle =  \sum_{S \in \binom{[n+m]}{n}} \langle r_{t}f_{t}v_{T}, v_{S} \rangle \langle \eta(v_{S}), v_{T} \rangle  = m + \sum_{S \in \binom{[n+m]}{n} \backslash \{ T \} }  \langle r_{t}f_{t}v_{T}, v_{S} \rangle \langle \eta(v_{S}), v_{T} \rangle .
\end{align}

By Lemma \ref{lemma}, for $T \neq S$, $\langle \eta(v_{S}), v_{T} \rangle \neq 0$ only when $S$ and $T$ differ by a single element. In the sum $r_{t}f_{t}v_{T}$, there are two types of possible $v_{S}$ that fulfill this criterion. 
\begin{enumerate}
\item \emph{We can obtain $v_{S}$ from a single row swap.} That is, up to signs, $v_{S}$ is given by $(j, n+j)v_{T}$ for $1 \leq j \leq i$, so $(j, n+j) \in R_{t}$. In this case, in order to write $(j, n+j)v_{T}$ in our basis, we must move $j$ from the $j^{th}$ row to the $1^{st}$ row of the second column and $n+j$ from the $j^{th}$ row to the $n^{th}$ row of the first column. In total, this gives a sign change of $(-1)^{j-1+n-j}= (-1)^{n-1}$. 
By Lemma \ref{lemma}, for such a $v_{S}$, we get $\langle \eta(v_{S}), v_{T}) \rangle = (-1)^{n+j +j} = (-1)^{n}$. Hence overall we get a contribution to Equation \ref{equation} of 
\[   \langle r_{t}f_{t}v_{T}, v_{S} \rangle \langle \eta(v_{S}), v_{T} \rangle  = (-1)^{n-1 + n} = -1. \]
There are $i$ such possible $v_{S}$. Therefore this case contributes $-i$ to Equation \ref{equation}. 

\item \emph{We can obtain $v_{S}$ by a swap coming from a column permutation $\sigma$ in $f_{t}$.} Note that because $f_{t}$ maintains the order of $\{ 1, 2, \dots , n \}, \{ n+1, \dots, n+i \}$, and $\{ n+i+1, \dots n+ m \}$ and we require that $|S \cap T| = n-1$, it must be that $S = \{ 1, 2, \dots, n-1, n+i+1 \}$. Suppose $\sigma$ moves $n$ to the $(n+\ell)^{th}$ row of $t$ for $1 \leq \ell \leq m-i$ and $n+i+1$ to the $k^{th}$ row of $t$ for $1 \leq k \leq n$. To calculate the sign of $\sigma$, write $\sigma = \sigma_{2}\sigma_{1}$, where 
\[ \sigma_{1} =  (n, n+ i + \ell)(n, n+i + \ell -1) \dots (n, n+ i + 1) \]
moves $n$ to the $n+\ell^{th}$ row of $t$ and then
\[ \sigma_{2} = (n+i-l,k)(n+i-1,k+1) \dots (n+i-1, n-2)(n+i-1, n-1) \] 
moves $n+i-1$ to the $k^{th}$ row of $t$.
It follows that $\sgn(\sigma_{1}) = \ell$ and $\sgn(\sigma_{2}) = n-k-1$, so $\sgn(\sigma) = \ell + n-k-1$. 
 
In $\sigma v_{T}$, $n$ is in the $(i + \ell)^{th}$ row of the second column and $n+i+1$ is in the $k^{th}$ row of the first column. In order to put this in our basis, we must move $n$ to the first row in the second column and $n+i+1$ to the $n^{th}$ row of the first column, which requires $i + \ell - 1$ transpositions and $n-k-1$ transpositions, respectively. 
Combining these gives a sign change of 
\[ (-1)^{(\ell + n-k-1) +(i + \ell -1 +n-k-1)} = (-1)^{i-1} .\]
For such a $v_{S}$, the coefficient of $\langle \eta(v_{S}), v_{T} \rangle$ is  $(-1)^{n+i +1 +n} = (-1)^{i+1}$.  

Thus for such a $v_{S}$ and $\sigma$, we get a total coefficient of $(-1)^{i+1 + i -1} = 1.$
There are $m-i$ choices for $\ell$ and $n$ choices for $k$, giving $n(m-i)$ possible $\sigma$. Hence we get a contribution to Equation \ref{equation} of $n(m-i)$.  

\end{enumerate}

Thus combining the $T=S$ case with the two cases above, we have 
\[ \omega_i=\langle \eta (r_{t} f_{t} v_{T}), v_{T} \rangle = m + (- i) + n(m-i) = (n+1)(m-i) . \] \vskip -.7cm \end{proof} 

\begin{proof}[Proof of Theorem \ref{imetakeralpha}] By Theorem \ref{eta}, $\omega_i$ is 0 only when $m=i$, so $\ker(\eta) \cong S^{\mu}$. Thus $\im(\eta) \cong \tilde{M}^{\mu} / \ker(\eta) = \ker(\alpha)$, and the theorem is proved.
\end{proof}

Theorem \ref{imetakeralpha} allows us to generate $\tilde{G^{\mu}}$ for any $\mu$ with two columns using only the single $\eta$ relation. 

We now consider any partition $\lambda = (\lambda_{1}, \dots , \lambda_{k})$ with conjugate $\lambda^{'} = (\lambda^{'}_{1}, \dots , \lambda^{'}_{j})$. 
For $t \in \mathcal{T}_{\lambda}$, let $h_{c}([t])$ be the image of $\eta$ on the $c$ and $(c+1)^{st}$ columns of $t$ that leaves the other columns of $[t]$ fixed.  
\begin{thm}[New Presentation of Specht Modules.]\label{spechtgarnir}
For any partition $\lambda$ of $m$, let $\tilde{H}^{\lambda}$ be the space generated by $h_{c}([t])$ for every $[t] \in \tilMlam$ and $1 \leq c \leq \lambda_{1}-1$. Then the kernel of $\alpha$ is $\tilde{H}^{\lambda}$. Thus
\[ S^{\lambda} \cong \tilde{M}^{\lambda} / \tilde{H}^{\lambda} .\]
\end{thm}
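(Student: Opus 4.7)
The plan is to deduce this result directly from Fulton's Theorem \ref{fultonthm} by showing the equality of subspaces $\tilde{H}^{\lambda} = \tilde{G}^{\lambda}$. Once this is established, the claimed isomorphism $S^{\lambda} \cong \tilde{M}^{\lambda}/\tilde{H}^{\lambda}$ is immediate. One inclusion is essentially by definition: each relation $h_c([t])$ is built as $\sum_{j=1}^{\lambda'_{c+1}} g_{c,1}^j([t])$, modified only in columns $c$ and $c{+}1$, and each $g_{c,1}^j([t])$ is a dual Garnir relation, giving $\tilde{H}^{\lambda} \subseteq \tilde{G}^{\lambda}$.

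For the reverse inclusion $\tilde{G}^{\lambda} \subseteq \tilde{H}^{\lambda}$, I would fix an arbitrary dual Garnir relation $g_{c,k}(t)$ and reduce to the two-column case already handled by Theorem \ref{imetakeralpha}. Let $V \subseteq \tilde{M}^{\lambda}$ be the subspace spanned by all ordered column tabloids that agree with $[t]$ outside of columns $c$ and $c{+}1$. Observe that $V$ contains $g_{c,k}(t')$ and $h_c([t'])$ for every $t'$ agreeing with $t$ outside these two columns. The key observation is then that $V$ is vector-space isomorphic to $\tilde{M}^{\mu}$, where $\mu = 2^{\lambda'_{c+1}} 1^{\lambda'_c - \lambda'_{c+1}}$ is the two-column partition with column lengths $\lambda'_c \geq \lambda'_{c+1}$; the isomorphism is obtained by forgetting the frozen outer columns and relabeling the $\lambda'_c + \lambda'_{c+1}$ remaining entries in order.

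Under this identification, $h_c$ corresponds exactly to the map $\eta$ of Definition \ref{defeta}, and each $g_{c,k}(t')$ supported in $V$ corresponds to a dual Garnir relation on $\tilde{M}^{\mu}$. Applying Theorem \ref{imetakeralpha} to $\mu$ gives that the image of $\eta$ equals the entire space of dual Garnir relations on $\tilde{M}^{\mu}$; pulling this statement back along the identification shows that $g_{c,k}(t)$ lies in the span of the relations $h_c([t'])$ for $t'$ agreeing with $t$ outside columns $c, c{+}1$. Since $c$, $k$, and $t$ were arbitrary, this proves $\tilde{G}^{\lambda} \subseteq \tilde{H}^{\lambda}$.

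The main technical point to verify carefully is the legitimacy of the reduction $V \cong \tilde{M}^{\mu}$: one must check that the antisymmetry identifications defining $\tilde{M}^{\lambda}$ restrict to those defining $\tilde{M}^{\mu}$, and that both $h_c$ and the family of Garnir relations $g_{c,k}$ depend only on the entries appearing in columns $c, c{+}1$ and their relative order. This is straightforward but should be stated explicitly, since it is precisely the reason no additional representation-theoretic input beyond Theorem \ref{imetakeralpha} is required; once it is spelled out, the two-column case does all the work and the theorem follows.
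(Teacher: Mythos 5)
Your proposal is correct and follows essentially the same route the paper intends: the paper states Theorem \ref{spechtgarnir} with no explicit proof, treating it as an immediate consequence of the two-column case (Theorem \ref{imetakeralpha}) via the definition of $h_c$ as ``$\eta$ applied to columns $c$ and $c+1$.'' Your write-up simply makes explicit the reduction --- identifying the span of tabloids frozen outside columns $c, c+1$ with $\tilde{M}^{\mu}$ for the two-column shape $\mu$ and checking that $h_c$ and the $g_{c,k}$ transport correctly --- which is exactly the detail the paper leaves implicit.
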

The proof of Theorem \ref{spechtgarnir} follows from Theorem \ref{imetakeralpha} and the definition of $h_{c}([t])$. \\

Theorem \ref{spechtgarnir} dramatically reduces the number of generators needed to find $\tilde{G}^{\lambda}$. The original construction of Theorem \ref{fultonthm} required enumerating over every $1 \leq k \leq \lambda _{c+1}'$ for every pair of columns $c$ and $c+1$ of every $t \in \Tlam$. Even Fulton's simplification using only $g_{c,1}$ relations requires enumerating over $t \in \Tlam$ for every pair of columns $c$ and $c+1$. By contrast, our construction uses a single relation for every pair of adjacent columns, and $[t]$ varies in $\tilMlam$, a significantly smaller space than $\Tlam$. 

\subsection{Implications for other presentations of Specht modules}
We now address the relationship between the presentation in Theorem \ref{spechtgarnir} and the presentations discussed in $\S \ref{sec:otherpresentations}$.

\subsubsection{Fulton's simplified presentation}
Note that because $\eta([t])$ is a sum of relations of the form $g_{c,1}(t)$, we have the containment
\[ \tilde{H}^{\lambda} \subseteq \tilde{I}^{\lambda} \subseteq \tilde{G}^{\lambda}.\]
Theorem \ref{spechtgarnir} shows that $\tilde{H}^{\lambda} = \tilde{G}^{\lambda}$, which forces equality between all three subspaces. It follows that Theorem \ref{spechtgarnir} actually implies Fulton's simplified presentation of $S^{\lambda}$ as the quotient $\tilde{M}^{\lambda} / \tilde{I}^{\lambda}$.

\subsubsection{Friedmann-Hanlon-Stanley-Wachs presentation}
It turns out that Theorem \ref{spechtgarnir} also implies Theorem \ref{thm:FHSWpres}, although it will take a bit more work to show. Fix a partition $\mu = 2^{n-1}1$. 

We will first write the relation $g_{1,n-1}$ as a map from the space $\tilde{M}^{\mu}$ to itself.\footnote{The Garnir relations are defined to take as input a \emph{tableau} and output a sum of column tabloids.} Recall that $v_{S}\in \tilde{M}^{\mu}$ is the column tabloid with first column $S$ for $S\in \binom{[2n-1]}{n}$, where both columns are assumed to be in increasing order. 

Again, for simplicity let $T = [n]$ and $R_{i} = \{ n+1, \dots 2n-1, i \} $.
Following the notation in \cite{FHSW}, define 
\[ \varphi(v_{T}) = v_{T} - \sum_{i=1}^{n} (-1)^{i-1} v_{R_{i}}. \]
Note that for $t \in \mathcal{T}_{\mu}$ with first column $T$ (in increasing order), $\varphi(v_{T})$ is precisely $g_{1,n-1}(t)$.

\begin{prop}\label{kernelalpha}
For $\mu = 2^{n-1}1$, we have $\im(\eta) \subseteq \im(\varphi) \subseteq \tilde{G}^{\lambda}.$

\end{prop}

\begin{proof}
With $T = [n]$ and 
for $i\in [n]$ and $j\in [n-1]$, let 
 \begin{align*}
 R_{i} = \{ n+1, \dots 2n-1, i \},\\
  S_{i,j} = \{ 1, \dots \hat{i}, \dots, n, n+j \}. 
 \end{align*}

Then 
\begin{equation*}  
\varphi (v_{T}) =v_{T} - \sum_{i=1}^{n} (-1)^{i-1} v_{R_{i}}; \hskip .5cm 
\varphi(v_{R_{i}}) = v_{R_{i}} - \left (  \sum_{j = 1}^{n-1} (-1)^{j+n-1}  v_{S_{i,j}} + (-1)^{i-1}v_{T} \right ) . 
\end{equation*}
It can then be shown that
\begin{equation} \label{eqneta} 
\eta(v_{T}) = -\left ( \varphi(v_{T}) + \sum_{i = 1}^{n} (-1)^{i-1} \varphi(v_{R_{i}}) \right )   ,\end{equation}
from which the claim follows. 
\end{proof}
Note that Theorem \ref{imetakeralpha} implies that $\im(\eta) = \tilde{G}^{\lambda}$. Therefore 
Proposition \ref{kernelalpha} in fact shows the equality:
\[ \im(\eta) = \im(\varphi) = \tilde{G}^{\lambda},\]
which implies the presentation in Theorem \ref{thm:FHSWpres}.

\bibliographystyle{plain}
\bibliography{bibliography}
\Addresses
\appendix

\end{document}